\newtheorem{pr}{Proposition}
\newtheorem{lm}{Lemma}
\DeclareMathOperator{\Sqf}{Sqf}
\DeclareMathOperator{\lcm}{lcm}
\newcommand{\rpr}{\mbox{\small\rm rpr}}
\author{Piotr J\k{e}drzejewicz, \L ukasz Matysiak, Janusz Zieli\'nski}
\title{A note on square-free factorizations}
\date{}
\begin{document}

\maketitle

\begin{abstract}
We analyze properties of various square-free factorizations
in greatest common divisor domains and domains satisfying
the ascending chain condition for principal ideals.
\end{abstract}

\begin{table}[b]\footnotesize\hrule\vspace{1mm}
Keywords: square-free element, factorization,
pre-Schreier domain, GCD-domain, ACCP-domain.\\
2010 Mathematics Subject Classification:
Primary 13F15, Secondary 13F20.
\end{table}

\section*{Introduction}

Throughout this article by a ring we mean a commutative ring with unity.
By a domain we mean a ring without zero divisors.
By $R^{\ast}$ we denote the set of all invertible elements of a ring $R$.
Given elements $a,b\in R$, we write $a\sim b$ if $a$ and $b$ are associated,
and $a\mid b$ if $b$ is divisible by $a$.
Furthermore, we write $a\:\rpr\,b$ if $a$ and $b$ are relatively prime,
that is, have no common non-invertible divisors.
If $R$ is a ring, then
by $\Sqf R$ we denote the set of all square-free elements of $R$,
where an element $a\in R$ is called square-free if it can not
be presented in the form $a=b^2c$ with $b\in R\setminus R^{\ast}$,
$c\in R$.

\medskip

In \cite{factorial} we discuss many factorial properties of subrings,
in particular involving square-free elements.
The aim of this paper is to collect various ways to present
an element as a product of square-free elements and to study
the existence and uniqueness questions in larger classes than
the class of unique factorization domains.
In Proposition~\ref{p1} we obtain the equivalence of factorizations
(ii) -- (vii) for GCD-domains.
It appears that some preparatory properties hold in a more
larger class, namely pre-Schreier domains (Lemma~\ref{l2}).
We also prove the existence of factorizations (i) -- (iii)
in Proposition~\ref{p1} for ACCP-domains,
but their uniqueness we obtain in Proposition~\ref{p2} for GCD-domains.
We refer to Clark's survey article \cite{Clark} for more information
about GCD-domains and ACCP-domains.

\section{Preliminary lemmas}
\label{s1}

Note the following easy lemma.

\begin{lm}
\label{l1}
Let $R$ be a ring.
If $a\in\Sqf R$ and $a=b_1b_2\dots b_n$, then
$b_1,b_2,\dots,b_n\in\Sqf R$ and $b_i\:\rpr\,b_j$ for $i\neq j$.
\end{lm}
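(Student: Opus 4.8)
The plan is to establish both assertions by contraposition, working directly from the definition of a square-free element. In each case the strategy is to show that a failure of the desired property forces a square factor $d^2$, with $d \in R \setminus R^{\ast}$, out of the product $b_1 b_2 \cdots b_n$, which immediately contradicts the hypothesis $a \in \Sqf R$.

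First I would prove that each factor is square-free. Suppose, for contradiction, that some $b_i \notin \Sqf R$. By definition this means $b_i = d^2 e$ for some $d \in R \setminus R^{\ast}$ and $e \in R$. Substituting this into $a = b_1 b_2 \cdots b_n$ and using commutativity to collect the remaining factors yields $a = d^2 \bigl( e \prod_{k \neq i} b_k \bigr)$, a representation of the required forbidden form with $d$ non-invertible. This contradicts $a \in \Sqf R$, so every $b_i$ lies in $\Sqf R$.

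Next I would prove pairwise relative primeness. Suppose $i \neq j$ and that $b_i$ and $b_j$ fail to satisfy $b_i \:\rpr\, b_j$, i.e.\ they share a common non-invertible divisor $d \in R \setminus R^{\ast}$. Writing $b_i = d u$ and $b_j = d v$ with $u,v \in R$, the product becomes $a = d^2 \bigl( u v \prod_{k \neq i,j} b_k \bigr)$, again exhibiting $a$ as $d^2$ times a ring element with $d$ non-invertible. This contradicts $a \in \Sqf R$, so $b_i \:\rpr\, b_j$ whenever $i \neq j$.

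I do not expect a genuine obstacle here: the argument is essentially a direct unwinding of the definition. The only points requiring a little care are the bookkeeping of factors when substituting and regrouping, and the use of commutativity of $R$ to merge the two occurrences of $d$ into a single square $d^2$. In particular, note that no gcd, pre-Schreier, or ascending chain hypotheses are invoked, which explains why the statement is valid for an arbitrary commutative ring $R$ rather than only for the more restrictive classes considered later in the paper.
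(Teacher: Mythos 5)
Your proof is correct; the paper states this lemma without proof (calling it easy), and your direct unwinding of the definition of square-free is exactly the intended argument. Both steps --- extracting $d^2$ from a non-square-free factor, and from a common non-unit divisor of two distinct factors --- are valid in any commutative ring, as you note.
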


Recall from \cite{Zafrullah} that a domain $R$ is called
pre-Schreier if every non-zero element $a\in R$ is primal,
that is, for every $b,c\in R$ such that $a\mid bc$
there exist $a_1,a_2\in R$ such that $a=a_1a_2$,
$a_1\mid b$ and $a_2\mid c$.

\begin{lm}
\label{l2}
Let $R$ be a pre-Schreier domain.

\medskip

\noindent
{\bf a)}
Let $a,b,c\in R$, $a\neq 0$.
If $a\mid bc$ and $a\:\rpr\,b$, then $a\mid c$.

\medskip

\noindent
{\bf b)}
Let $a,b,c,d\in R$.
If $ab=cd$, $a\:\rpr\,c$ and $b\:\rpr\,d$, then $a\sim d$
and $b\sim c$.

\medskip

\noindent
{\bf c)}
Let $a,b,c\in R$.
If $ab=c^2$ and $a\:\rpr\,b$, then there exist $c_1,c_2\in R$
such that $a\sim c_1^2$, $b\sim c_2^2$ and $c=c_1c_2$.

\medskip

\noindent
{\bf d)}
Let $a_1,\dots,a_n,b\in R$.
If $a_i\:\rpr\,b$ for $i=1,\dots,n$, then $a_1\ldots a_n\:\rpr\,b$.

\medskip

\noindent
{\bf e)}
Let $a_1,\dots,a_n\in R$.
If $a_1,\dots,a_n\in\Sqf R$ and $a_i\:\rpr\,a_j$ for all $i\neq j$,
then $a_1\ldots a_n\in\Sqf R$.
\end{lm}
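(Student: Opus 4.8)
The plan is to argue by induction on $n$, reducing everything to the two-factor case and then invoking parts (c) and (d). The case $n=1$ is immediate. For the inductive step I would set $p=a_1\cdots a_{n-1}$, which lies in $\Sqf R$ by the induction hypothesis. Since $a_i\:\rpr\,a_n$ for $i=1,\dots,n-1$, part (d) applied to $a_1,\dots,a_{n-1}$ and $a_n$ gives $a_1\cdots a_{n-1}\:\rpr\,a_n$, that is, $p\:\rpr\,a_n$. Hence the whole statement follows once I establish the case of two square-free, relatively prime factors.

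So it remains to prove: if $a,b\in\Sqf R$ and $a\:\rpr\,b$, then $ab\in\Sqf R$. Note first that square-free elements are non-zero, since $0=0^2\cdot 0$ exhibits $0$ as non-square-free; hence $a,b\neq 0$ and $ab\neq 0$. Suppose, toward showing $ab$ is square-free, that $ab=c^2d$ for some $c,d\in R$; my goal is to prove $c\in R^{\ast}$. Here $c\neq 0$ because $ab\neq 0$. As $R$ is pre-Schreier, the non-zero element $c^2$ is primal, and since $c^2\mid ab$ there exist $u,v\in R$ with $c^2=uv$, $u\mid a$ and $v\mid b$.

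The key step is now to recognize $u$ and $v$ as squares and push the square-freeness hypotheses down to them. Because $u\mid a$ and $v\mid b$ while $a\:\rpr\,b$, any common divisor of $u$ and $v$ divides both $a$ and $b$ and is therefore a unit, i.e.\ $u\:\rpr\,v$. Applying part (c) to $uv=c^2$ yields $c_1,c_2\in R$ with $u\sim c_1^2$, $v\sim c_2^2$ and $c=c_1c_2$. Then $c_1^2\mid a$ forces $c_1\in R^{\ast}$ since $a\in\Sqf R$, and likewise $c_2\in R^{\ast}$ from $c_2^2\mid b$; hence $c=c_1c_2\in R^{\ast}$, as required.

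I anticipate the main obstacle to be the very first move in the second paragraph: one must apply primality to the square $c^2$ rather than to $c$ itself, which is exactly what lets part (c) split the factorization compatibly with the coprime pair $(a,b)$. A minor preliminary point is to record that square-free elements are non-zero, so that every element entering the argument is non-zero and the primality hypothesis is applicable.
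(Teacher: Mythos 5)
Your proposal proves only part e) of the lemma; parts a)--d) are nowhere established, and your argument for e) explicitly invokes c) and d) (and, through the paper's own dependency chain, implicitly a) and b), since c) is proved from b) and b) from a)). Since the statement under review is the full five-part lemma, this is a genuine gap: you still owe proofs that in a pre-Schreier domain a coprime factor can be cancelled from a divisibility (a), that $ab=cd$ with $a\:\rpr\,c$ and $b\:\rpr\,d$ forces $a\sim d$ and $b\sim c$ (b), that a coprime factorization of a square splits into squares up to associates (c), and that relative primality is preserved under products (d). Each is a short exercise with primality, but none is supplied, and e) cannot stand without c) and d).

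For part e) itself your argument is correct and essentially the paper's: both induct on $n$, use d) to get $a_1\cdots a_{n-1}\:\rpr\,a_n$, and reduce to the case of two coprime square-free factors. The only difference is which element receives the primality hypothesis. Writing $ab=c^2d$, you split the square $c^2$ directly, obtaining $c^2=uv$ with $u\mid a$, $v\mid b$, $u\:\rpr\,v$, and then c) gives $u\sim c_1^2$, $v\sim c_2^2$, $c=c_1c_2$. The paper instead writes the product as $b^2c$, splits the cofactor $c$ across $a$ and $a_{n+1}$, and applies c) to the complementary factors $d,e$ with $de=b^2$. Both routes finish by noting that a square dividing a square-free element must have unit square root; yours is marginally more direct. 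Your preliminary observation that square-free elements are non-zero (so that primality applies) is a point the paper leaves implicit and is worth recording.
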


\begin{proof}
{\bf a)}
If $a\mid bc$, then $a=a_1a_2$ for some $a_1,a_2\in R\setminus\{0\}$
such that $a_1\mid b$ and $a_2\mid c$.
If, moreover, $a\:\rpr\,b$, then $a_1\in R^{\ast}$.
Hence, $a\sim a_2$, so $a\mid c$.

\medskip

\noindent
{\bf b)}
Assume that $ab=cd$, $a\:\rpr\,c$ and $b\:\rpr\,d$.
If $a=0$ and $R$ is not a field, then $c\in R^{\ast}$,
so $d=0$ and then $b\in R^{\ast}$.
Now, let $a,d\neq 0$.

\medskip

Since $a\mid cd$ and $a\:\rpr\,c$, we have $a\mid d$ by a).
Similarly, since $d\mid ab$ and $d\:\rpr\,b$, we obtain $d\mid a$.
Hence, $a\sim d$, and then $b\sim c$.

\medskip

\noindent
{\bf c)}
Let $ab=c^2$ and $a\:\rpr\,b$.
Since $c\mid ab$, there exist $c_1,c_2\in R\setminus\{0\}$
such that $c_1\mid a$, $c_2\mid b$ and $c=c_1c_2$.
Hence, $a=c_1d$ and $b=c_2e$ for some $d,e\in R$,
and we obtain $de=c_1c_2$.
We have $d\:\rpr\,c_2$, because $d\mid a$ and $c_2\mid b$,
analogously $e\:\rpr\,c_1$, so $d\sim c_1$ and $e\sim c_2$, by b).
Finally, $a\sim c_1^2$, $b\sim c_2^2$.

\medskip

\noindent
{\bf d)}
Induction.
Let $a_i\:\rpr\,b$ for $i=1,\dots,n+1$.
Put $a=a_1\ldots a_n$.
Assume that $a\:\rpr\,b$.
Let $c\in R\setminus\{0\}$ be a common divisor of $aa_{n+1}$ and $b$.
Since $c\mid aa_{n+1}$, there exist $c_1,c_2\in R\setminus\{0\}$
such that $c_1\mid a$, $c_2\mid a_{n+1}$ and $c=c_1c_2$.
We see that $c_1,c_2\mid b$, so $c_1,c_2\in R^{\ast}$,
and then $c\in R^{\ast}$.

\medskip

\noindent
{\bf e)}
Induction.
Take $a_1,\dots,a_{n+1}\in\Sqf R$ such that $a_i\:\rpr\,a_j$
for $i\neq j$.
Put $a=a_1\ldots a_n$.
Assume that $a\in\Sqf R$.
Let $aa_{n+1}=b^2c$ for some $b,c\in R\setminus\{0\}$.

\medskip

Since $c\mid aa_{n+1}$, there exist $c_1,c_2\in R\setminus\{0\}$
such that $c=c_1c_2$, $c_1\mid a$ and $c_2\mid a_{n+1}$,
so $a=c_1d$ and $a_{n+1}=c_2e$, where $d,e\in R$.
We obtain $de=b^2$.
By d) we have $a\:\rpr\,a_{n+1}$, so $d\:\rpr\,e$.
And then by c), there exist $b_1,b_2\in R$
such that $d\sim b_1^2$, $e\sim b_2^2$ and $b=b_1b_2$.
Since $a,a_{n+1}\in\Sqf R$, we infer $b_1,b_2\in R^{\ast}$,
so $b\in R^{\ast}$.
\end{proof}

\section{Square-free factorizations}
\label{s2}

In Proposition~\ref{p1} below we collect possible presentations
of an element as a product of square-free elements or their powers.
We distinct presentations (ii) and (iii), presentations (iv) and (v),
and presentations (vi) and (vii),
because (ii), (iv) and (vi) are of a simpler form,
but in (iii), (v) and (vii) the uniqueness will be more natural
(in Proposition~\ref{p2}).

\medskip

Recall that a domain $R$ is called a GCD-domain if the intersection
of any two principal ideals is a principal ideal.
Every GCD-domain is pre-Schreier (\cite{Cohn}, Theorem~2.4).
Note that in a GCD-domain LCMs exist (\cite{Cohn}, Theorem~2.1).
Recall also that a domain $R$ is called an ACCP-domain
if it satisfies the ascending chain condition for principal ideals.

\begin{pr}
\label{p1}
Let $R$ be a ring.
Given a non-zero element $a\in R\setminus R^{\ast}$,
consider the following conditions:

\medskip

\noindent
{\rm (i)} \
there exist $b\in R$ and $c\in\Sqf R$ such that $a=b^2c$,

\medskip

\noindent
{\rm (ii)} \
there exist $n\geqslant 0$ and $s_0,s_1,\dots,s_n\in\Sqf R$
such that $a=s_n^{2^n}s_{n-1}^{2^{n-1}}\dots s_1^2s_0$,

\medskip

\noindent
{\rm (iii)} \
there exist $n\geqslant 1$,
$s_1,s_2,\dots,s_n\in (\Sqf R)\setminus R^{\ast}$,
$k_1<k_2<\ldots<k_n$, $k_1\geqslant 0$,
and $c\in R^{\ast}$ such that
$a=cs_n^{2^{k_n}}s_{n-1}^{2^{k_{n-1}}}\dots s_2^{2^{k_2}}
s_1^{2^{k_1}}$,

\medskip

\noindent
{\rm (iv)} \
there exist $n\geqslant 1$ and $s_1,s_2,\dots,s_n\in\Sqf R$
such that $s_i\mid s_{i+1}$ for $i=1,\dots,n-1$,
and $a=s_1s_2\dots s_n$,

\medskip

\noindent
{\rm (v)} \
there exist $n\geqslant 1$,
$s_1,s_2,\dots,s_n\in (\Sqf R)\setminus R^{\ast}$,
$k_1,k_2,\ldots,k_n\geqslant 1$, and $c\in R^{\ast}$ such that
$s_i\mid s_{i+1}$ and $s_i\not\sim s_{i+1}$ for $i=1,\dots,n-1$,
and $a=cs_1^{k_1}s_2^{k_2}\dots s_n^{k_n}$,

\medskip

\noindent
{\rm (vi)} \
there exist $n\geqslant 1$ and $s_1,s_2,\dots,s_n\in\Sqf R$
such that $s_i\:\rpr\,s_j$ for $i\neq j$,
and $a=s_1s_2^2s_3^3\ldots s_n^n$,

\medskip

\noindent
{\rm (vii)} \
there exist $n\geqslant 1$,
$s_1,s_2,\dots,s_n\in (\Sqf R)\setminus R^{\ast}$,
$k_1<k_2<\ldots<k_n$, $k_1\geqslant 1$, and $c\in R^{\ast}$
such that $s_i\:\rpr\,s_j$ for $i\neq j$,
and $a=cs_1^{k_1}s_2^{k_2}\dots s_n^{k_n}$.

\medskip

\noindent
{\bf a)}
In every ring $R$ the following holds:
$${\rm (i)}\Leftarrow {\rm (ii)}\Leftrightarrow {\rm (iii)},
\quad\quad {\rm (iv)}\Leftrightarrow {\rm (v)}\Rightarrow
{\rm (vi)}\Leftrightarrow {\rm (vii)}.$$

\medskip

\noindent
{\bf b)}
If $R$ is a GCD-domain, then all conditions {\rm (ii)} -- {\rm (vii)}
are equivalent.

\medskip

\noindent
{\bf c)}
If $R$ is a ACCP-domain, then conditions {\rm (i)} -- {\rm (iii)}
hold.

\medskip

\noindent
{\bf d)}
If $R$ is a UFD, then all conditions {\rm (i)} -- {\rm (vii)} hold.
\end{pr}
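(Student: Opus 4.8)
The plan is to dispatch part~a) by direct manipulation, to reduce parts~c) and~d) to the existence of a single factorization via the chain condition, and to concentrate the real work of part~b) in one refinement step. For a), the equivalences (ii)$\Leftrightarrow$(iii), (iv)$\Leftrightarrow$(v) and (vi)$\Leftrightarrow$(vii) are bookkeeping: passing to a reduced form means discarding the invertible factors into the unit $c$, grouping consecutive associated terms of a divisibility chain into powers, and reindexing, while the reverse passages insert trivial factors $1$ and absorb $c$ into one square-free term (a unit multiple of a square-free element is square-free). The implication (ii)$\Rightarrow$(i) is immediate with $b=s_n^{2^{n-1}}\dots s_1$ and $c=s_0$. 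The substantive point is (iv)$\Rightarrow$(vi): writing $s_{i+1}=s_iw_i$ along the chain gives $s_k=s_1w_1\dots w_{k-1}$, hence $a=s_1^{\,n}w_1^{\,n-1}w_2^{\,n-2}\dots w_{n-1}$; since $s_n=s_1w_1\dots w_{n-1}\in\Sqf R$, Lemma~\ref{l1} forces $s_1,w_1,\dots,w_{n-1}$ to be square-free and pairwise relatively prime, and relabelling the factor carrying exponent $j$ produces (vi). I foresee no obstacle in a).

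For c), in an ACCP-domain every nonzero nonunit $a$ satisfies (i): if $a\notin\Sqf R$ write $a=b_1^2c_1$ with $b_1\notin R^{\ast}$ and repeat on $c_1$ while it is not square-free; the ideals $(b_1)\subsetneq(b_1b_2)\subsetneq\dots$ strictly ascend, so the process stops by ACCP with a square-free cofactor. Iterating (i) on the successive square-root bases gives $a=b_m^{2^{m+1}}\prod_{j=0}^{m}s_j^{2^j}$ with $s_j\in\Sqf R$; the chain $(b_0)\subsetneq(b_1)\subsetneq\dots$ again terminates with some $b_n\in R^{\ast}$, and as $b_n^{2^{n+1}}s_n^{2^n}=(b_n^2s_n)^{2^n}$ with $b_n^2s_n\in\Sqf R$ we reach (ii), whence (i)--(iii) by a). Part~d) is then immediate: a UFD is both ACCP and GCD, so (i)--(iii) hold by c) and the rest by the equivalence in b).

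Part b) is the heart of the matter. Over a pre-Schreier domain the cycle among (iv)--(vii) already closes: for (vi)$\Rightarrow$(iv) set $u_\ell=s_\ell s_{\ell+1}\dots s_n$, which is square-free by Lemma~\ref{l2}e and satisfies $u_n\mid\dots\mid u_1$ and $\prod_\ell u_\ell=a$, so reindexing gives (iv); for (vi)$\Rightarrow$(ii) expand $i=\sum_j\varepsilon_{ij}2^j$ in binary and collect $t_j=\prod_{\varepsilon_{ij}=1}s_i$, again square-free by Lemma~\ref{l2}e. The one implication that genuinely needs the GCD hypothesis, and the main obstacle, is (ii)$\Rightarrow$(vii): manufacturing a relatively prime factorization from the non-coprime data in (ii). I plan to prove the self-contained claim that in a GCD-domain any product $p_1^{e_1}\dots p_r^{e_r}$ of square-free elements has a factorization $q_1^{d_1}\dots q_s^{d_s}$ with the $q_l$ square-free and pairwise relatively prime, by induction on $r$. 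The inductive step merges a square-free $s$ into a coprime family $t_1,\dots,t_m$: put $g_j=\gcd(s,t_j)$, $t_j=g_jr_j$, $s=r_0g_1\dots g_m$; then Lemma~\ref{l1} makes all $g_j,r_j,r_0$ square-free, while Lemma~\ref{l2}a together with the GCD property gives pairwise coprimality of the enlarged family $\{r_0,g_j,r_j\}$, and back-substitution re-expresses the product over it. Applying this to (ii) and then grouping the $q_l$ by the value of $d_l$ (each group multiplying to a square-free element by Lemma~\ref{l2}e) yields (vii); combined with the pre-Schreier implications above, all of (ii)--(vii) become equivalent. The delicate verification is coprimality of $r_0$ with each $t_j$, for which I use that $\gcd(r_0,t_j)$ divides both $g_j=\gcd(s,t_j)$ and $r_0$, while $r_0\:\rpr\,g_j$ by Lemma~\ref{l1}.
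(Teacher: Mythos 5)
Your proposal is correct in substance and matches the paper on a), c), d) and the easy directions of b), but it takes a genuinely different route for the one hard implication in b). The paper closes the cycle by proving (ii)$\Rightarrow$(vi) directly with a closed formula: for each $k\in\{1,\dots,2^{n+1}-1\}$ with binary digits $c_i^{(k)}$ it sets $t_k'=\gcd(s_i\colon c_i^{(k)}=1)$, $t_k''=\lcm(s_i\colon c_i^{(k)}=0)$, defines $t_k$ by $t_k'=\gcd(t_k',t_k'')\cdot t_k$, verifies $\gcd(s_i,t_k)=t_k^{c_i^{(k)}}$ and hence $s_i=\prod_k t_k^{c_i^{(k)}}$, so that $\prod_i s_i^{2^i}=\prod_k t_k^k$ is already of the form (vi). You instead prove (ii)$\Rightarrow$(vii) by an inductive coprime-refinement lemma, merging one square-free factor at a time into a pairwise relatively prime family via $g_j=\gcd(s,t_j)$, $t_j=g_jr_j$, $s=r_0g_1\cdots g_m$, and then grouping equal exponents using Lemma~\ref{l2}~d), e). Your argument is sound: the delicate point $r_0\:\rpr\,r_j$ follows as you indicate, and the step $g_1\cdots g_m\mid s$ uses that pairwise relatively prime divisors of $s$ have product dividing $s$ in a GCD-domain. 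It is more modular than the paper's single explicit construction, at the cost of an induction. Your remark that (iv)--(vii) and (vi)$\Rightarrow$(ii) already work over pre-Schreier domains is consistent with the paper, whose proofs of those steps only invoke Lemma~\ref{l2}~e).

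One slip in c): the chain you invoke for the existence of (i), namely $(b_1)\subsetneq(b_1b_2)\subsetneq\cdots$, is not ascending --- in fact $(b_1b_2)\subseteq(b_1)$ --- and ACCP says nothing about descending chains. The chain that terminates is the ascending chain of cofactors $(a)\subsetneq(c_1)\subsetneq(c_2)\subsetneq\cdots$, exactly as in the paper. Your second iteration (for (ii)) is fine, but needs the one-line justification that $(b_k)=(b_{k+1})$ forces $b_k\in R^{\ast}$ (since $b_k=b_{k+1}^2s_{k+1}\sim b_{k+1}$ gives $b_{k+1}s_{k+1}\in R^{\ast}$), so the chain $(b_0)\subsetneq(b_1)\subsetneq\cdots$ is strict until a unit appears; the paper instead stops as soon as the base is square-free and appends it as the top factor.
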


\begin{proof}
\noindent
{\bf a)}
Implication ${\rm (i)}\Leftarrow {\rm (ii)}$ and equivalencies
${\rm (ii)}\Leftrightarrow {\rm (iii)}$,
${\rm (iv)}\Leftrightarrow {\rm (v)}$,
${\rm (vi)}\Leftrightarrow {\rm (vii)}$
are obvious, so it is enough to prove implication
${\rm (iv)}\Rightarrow {\rm (vi)}$.

\medskip

Assume that $a=s_1s_2\dots s_n$, where $s_1,s_2,\dots,s_n\in\Sqf R$
and $s_i\mid s_{i+1}$ for $i=1,\dots,n-1$.
Let $s_{i+1}=s_it_{i+1}$, where $t_{i+1}\in R$, for $i=1,\dots,n-1$.
Put also $t_1=s_1$.
Then $s_i=t_1t_2\dots t_i$ for each $i$.
Since $s_n\in\Sqf R$, by Lemma~\ref{l1} we obtain
that $t_1,t_2,\dots,t_n\in\Sqf R$ and $t_i\:\rpr\,t_j$ for $i\neq j$.
Morover, we have $s_1s_2\dots s_n=t_1^nt_2^{n-1}\dots t_n$.

\medskip

\noindent
{\bf b)}
Let $R$ be a GCD-domain.

\medskip

\noindent
${\rm (vi)}\Rightarrow {\rm (iv)}$ \
Assume that $a=s_1s_2^2s_3^3\ldots s_n^n$,
where $s_1,s_2,\dots,s_n\in\Sqf R$
and $s_i\:\rpr\,s_j$ for $i\neq j$.
We see that
$$s_1s_2^2s_3^3\ldots s_n^n=
s_n(s_ns_{n-1})(s_ns_{n-1}s_{n-2})\dots
(s_ns_{n-1}\dots s_2)(s_ns_{n-1}\dots s_2s_1).$$
Since $R$ is a GCD-domain, $s_ns_{n-1}\dots s_i\in\Sqf R$
for each $i$ by Lemma~\ref{l2} e).

\medskip

\noindent
${\rm (vi)}\Rightarrow {\rm (ii)}$ \
Let $a=s_1s_2^2s_3^3\ldots s_n^n$, where $s_1,s_2,\dots,s_n\in\Sqf R$,
and $s_i\:\rpr\,s_j$ for $i\neq j$.
For every $k\in\{1,2,\dots,n\}$ put $k=\sum_{i=0}^r c_i^{(k)} 2^i$,
where $c_i^{(k)}\in\{0,1\}$.
Then
$$a=\prod_{k=1}^ns_k^k=
\prod_{k=1}^ns_k^{\sum_{i=0}^r c_i^{(k)} 2^i}=
\prod_{k=1}^n\prod_{i=0}^r s_k^{c_i^{(k)} 2^i}=
\prod_{i=0}^r\big(\prod_{k=1}^n s_k^{c_i^{(k)}}\big)^{2^i},$$
where $\prod_{k=1}^n s_k^{c_i^{(k)}}\in\Sqf R$
for each $i$ by Lemma~\ref{l2} e).

\medskip

\noindent
${\rm (ii)}\Rightarrow {\rm (vi)}$ \
Let $a=s_n^{2^n}s_{n-1}^{2^{n-1}}\dots s_1^2s_0$,
where $s_0,s_1,\dots,s_n\in\Sqf R$.
For every $k\in\{1,2,\dots,2^{n+1}-1\}$ put
$k=\sum_{i=0}^n c_i^{(k)} 2^i$, where $c_i^{(k)}\in\{0,1\}$.
Let $t_k'=\gcd(s_i\colon c_i^{(k)}=1)$,
$t_k''=\lcm(s_i\colon c_i^{(k)}=0)$
and $t_k'=\gcd(t_k',t_k'')\cdot t_k$, where $t_k\in R$.
Then $t_k$ is the greatest among these common divisors
of all $s_i$ such that $c_i^{(k)}=1$,
which are relatively prime to all $s_i$ such that $c_i^{(k)}=0$.
In particular,
$t_k\mid s_i$ for every $k,i$ such that $c_i^{(k)}=1$,
and $t_k\:\rpr\,s_i$ for every $k,i$ such that $c_i^{(k)}=0$.
In each case, $\gcd(s_i,t_k)=t_k^{c_i^{(k)}}$.
Moreover, $t_k\:\rpr\,t_l$ for every $k\neq l$.

\medskip

Since $s_i\mid t_1t_2\dots t_{2^{n+1}-1}$, we obtain
$$s_i=\gcd(s_i,\prod_{k=1}^{2^{n+1}-1}t_k)=
\prod_{k=1}^{2^{n+1}-1}\gcd(s_i,t_k)=
\prod_{k=1}^{2^{n+1}-1}t_k^{c_i^{(k)}},$$
so
$$\prod_{i=0}^n(s_i)^{2^i}=
\prod_{i=0}^n\prod_{k=1}^{2^{n+1}-1}\big(t_k^{c_i^{(k)}}\big)^{2^i}=
\prod_{k=1}^{2^{n+1}-1}\prod_{i=0}^nt_k^{c_i^{(k)} 2^i}=
\prod_{k=1}^{2^{n+1}-1}t_k^{\sum_{i=0}^n c_i^{(k)} 2^i}=
\prod_{k=1}^{2^{n+1}-1}t_k^k.$$

Moreover, $t_k\in\Sqf R$, because for $k\in\{1,2,\dots,2^{n+1}-1\}$
there exists $i$ such that $c_i^{(k)}=1$, and then $t_k\mid s_i$.

\medskip

\noindent
{\bf c)}
Let $R$ be an ACCP-domain.
In this proof we follow the idea of the second proof
of Proposition 9 from \cite{Clark}, p.\ 7, 8.

\medskip

\noindent
{\rm (i)} \
If $a\not\in\Sqf R$, then $a=b_1^2c_1$, where
$b_1\in R\setminus R^{\ast}$, $c_1\in R$.
If $c_1\not\in\Sqf R$, then $c_1=b_2^2c_2$, where
$b_2\in R\setminus R^{\ast}$, $c_2\in R$.
Repeating this process, we obtain a strongly ascending
chain of principal ideals
$Ra\subsetneqq Rc_1\subsetneqq Rc_2\subsetneqq \ldots$,
so for some $k$ we will have $c_{k-1}=b_k^2c_k$,
$b_k\in R\setminus R^{\ast}$, and $c_k\in\Sqf R$.
Then $a=(b_1\ldots b_k)^2c_k$.

\medskip

\noindent
{\rm (iii)} \
If $a\not\in\Sqf R$, then by {\rm (i)} there exist
$a_1\in R\setminus R^{\ast}$ and $s_0\in\Sqf R$
such that $a=a_1^2s_0$.
If $a_1\not\in\Sqf R$, then again, by {\rm (i)}
there exist $a_2\in R\setminus R^{\ast}$ and $s_1\in\Sqf R$
such that $a_1=a_2^2s_1$.
Repeating this process, we obtain a strongly ascending
chain of principal ideals
$Ra\subsetneqq Ra_1\subsetneqq Ra_2\subsetneqq \ldots$,
so for some $k$ we will have $a_{k-1}=a_k^2s_{k-1}$,
$a_k\in (\Sqf R)\setminus R^{\ast}$, $s_{k-1}\in\Sqf R$.
Putting $s_k=a_k$ we obtain:
$$a=a_1^2s_0=a_2^{2^2}s_1^2s_0=\ldots=
s_n^{2^n}\ldots s_2^{2^2}s_1^2s_0.$$

\medskip

\noindent
{\bf d)}
This is a standard fact following from the irreducible
decomposition.
\end{proof}

\section{The uniqueness of factorizations}
\label{s3}

The following proposition concerns the uniqueness of square-free
decompositions from Proposition~\ref{p1}.
In {\rm (i)} -- {\rm (iii)} we assume that $R$ is a GCD-domain,
in {\rm (iv)} -- {\rm (vii)} $R$ is a UFD.

\begin{pr}
\label{p2}
{\rm (i)} \
Let $b,d\in R$ and $c,e\in\Sqf R$.
If $$b^2c=d^2e,$$ then $b\sim d$ and $c\sim e$.

\medskip

\noindent
{\rm (ii)} \
Let $s_0,s_1,\dots,s_n\in\Sqf R$ and $t_0,t_1,\dots,t_m\in\Sqf R$,
$n\leqslant m$.
If $$s_n^{2^n}s_{n-1}^{2^{n-1}}\dots s_1^2s_0=
t_m^{2^m}t_{m-1}^{2^{m-1}}\dots t_1^2t_0,$$
then $s_i\sim t_i$ for $i=0,\dots,n$ and, if $m>n$,
then $t_i\in R^{\ast}$ for $i=n+1,\dots,m$.

\medskip

\noindent
{\rm (iii)} \
Let $s_1,s_2,\dots,s_n\in (\Sqf R)\setminus R^{\ast}$,
$t_1,t_2,\dots,t_m\in (\Sqf R)\setminus R^{\ast}$,
$k_1<k_2<\ldots<k_n$, $l_1<l_2<\ldots<l_m$ and $c,d\in R^{\ast}$.
If $$cs_n^{2^{k_n}}s_{n-1}^{2^{k_{n-1}}}\dots s_2^{2^{k_2}}
s_1^{2^{k_1}}=dt_m^{2^{l_m}}t_{m-1}^{2^{l_{m-1}}}\dots t_2^{2^{l_2}}
t_1^{2^{l_1}},$$ then $n=m$, $s_i\sim t_i$ and $k_i=l_i$
for $i=1,\dots,n$.

\medskip

\noindent
{\rm (iv)} \
Let $s_1,s_2,\dots,s_n\in\Sqf R$, $t_1,t_2,\dots,t_m\in\Sqf R$,
$n\leqslant m$, $s_i\mid s_{i+1}$ for $i=1,\dots,n-1$,
and $t_i\mid t_{i+1}$ for $i=1,\dots,m-1$.
If $$s_1s_2\dots s_n=t_1t_2\dots t_m,$$
then $s_i\sim t_{i+m-n}$ for $i=1,\dots,n$
and, if $m>n$, then $t_i\in R^{\ast}$ for $i=1,\dots,m-n$.

\medskip

\noindent
{\rm (v)} \
Let $s_1,s_2,\dots,s_n\in (\Sqf R)\setminus R^{\ast}$,
$t_1,t_2,\dots,t_m\in (\Sqf R)\setminus R^{\ast}$,
$k_1,k_2,\ldots,k_n$ $\geqslant 1$, $l_1,l_2,\ldots,l_m\geqslant 1$,
$c,d\in R^{\ast}$,
$s_i\mid s_{i+1}$ and $s_i\not\sim s_{i+1}$ for $i=1,\dots,n-1$,
$t_i\mid t_{i+1}$ and $t_i\not\sim t_{i+1}$ for $i=1,\dots,m-1$.
If $$cs_1^{k_1}s_2^{k_2}\dots s_n^{k_n}=
dt_1^{l_1}t_2^{l_2}\dots t_m^{l_m},$$
then $n=m$, $s_i\sim t_i$ and $k_i=l_i$ for $i=1,\dots,n$.

\medskip

\noindent
{\rm (vi)} \
Let $s_1,s_2,\dots,s_n\in\Sqf R$, $t_1,t_2,\dots,t_m\in\Sqf R$,
$n\leqslant m$, $s_i\:\rpr\,s_j$ for $i\neq j$
and $t_i\:\rpr\,t_j$ for $i\neq j$.
If $$s_1s_2^2s_3^3\ldots s_n^n=t_1t_2^2t_3^3\ldots t_m^m,$$
then $s_i\sim t_i$ for $i=1,\dots,n$ and, if $m>n$,
then $t_i\in R^{\ast}$ for $i=n+1,\dots,m$.

\medskip

\noindent
{\rm (vii)} \
Let $s_1,s_2,\dots,s_n\in (\Sqf R)\setminus R^{\ast}$,
$t_1,t_2,\dots,t_m\in (\Sqf R)\setminus R^{\ast}$,
$1\leqslant k_1<k_2<\ldots<k_n$, $1\leqslant l_1<l_2<\ldots<l_m$,
$c,d\in R^{\ast}$, $s_i\:\rpr\,s_j$ for $i\neq j$,
and $t_i\:\rpr\,t_j$ for $i\neq j$.
If $$cs_1^{k_1}s_2^{k_2}\dots s_n^{k_n}=
dt_1^{l_1}t_2^{l_2}\dots t_m^{l_m},$$
then $n=m$, $s_i\sim t_i$ and $k_i=l_i$ for $i=1,\dots,n$.
\end{pr}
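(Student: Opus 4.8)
The plan is to treat (i) as the fundamental case, to bootstrap the binary factorizations (ii) and (iii) from it by induction and normalization, and to reduce the UFD statements (iv)--(vii) to bookkeeping with prime multiplicities.

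For (i), working in a GCD-domain I would first set $g=\gcd(b,d)$ and write $b=gb_1$, $d=gd_1$ with $b_1\:\rpr\,d_1$; cancelling $g^2$ reduces $b^2c=d^2e$ to $b_1^2c=d_1^2e$. Since $b_1\:\rpr\,d_1$, two applications of Lemma~\ref{l2}~d) give $d_1^2\:\rpr\,b_1^2$, and then $d_1^2\mid b_1^2c$ together with Lemma~\ref{l2}~a) yields $d_1^2\mid c$; as $c\in\Sqf R$ this forces $d_1\in R^{\ast}$, and symmetrically $b_1\in R^{\ast}$. Hence $b\sim g\sim d$, and cancelling $b^2\sim d^2$ in the domain gives $c\sim e$. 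For (ii) I would induct on $m$: writing the left side as $B^2s_0$ with $B=s_n^{2^{n-1}}\dots s_2^2s_1$ and the right side as $D^2t_0$ with $D=t_m^{2^{m-1}}\dots t_1$, part (i) gives $s_0\sim t_0$ and $B\sim D$; absorbing the resulting unit into the exponent-one factor of $D$, the relation $B\sim D$ is again an identity of type (ii) with top indices $n-1\leqslant m-1$, so the inductive hypothesis finishes the claim. Statement (iii) then follows by viewing a factorization of type (iii) as one of type (ii) in which the positions $2^{k_1},\dots,2^{k_n}$ carry the non-unit factors $s_1,\dots,s_n$ and all remaining positions carry units (the unit $c$ being absorbed); applying (ii) position by position and using that an associate of a non-unit is a non-unit, the sets of non-unit positions must coincide, giving $n=m$, $k_i=l_i$ and $s_i\sim t_i$.

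For the UFD cases the key observation is that a square-free element is, up to a unit, the product of the distinct primes dividing it, so each identity translates into a statement about the multiplicities $e_p=v_p(a)$. In (iv) the chain $s_i\mid s_{i+1}$ makes $v_p(s_i)$ a nondecreasing $\{0,1\}$-valued sequence, whence $s_i\sim\prod_{e_p\geqslant n-i+1}p$ is determined by $a$ and $n$; comparing with $t_i\sim\prod_{e_p\geqslant m-i+1}p$ and noting $\max_p e_p\leqslant n$ gives $s_i\sim t_{i+m-n}$ and $t_i\in R^{\ast}$ for $i\leqslant m-n$. In (vi) pairwise coprimality means each prime divides exactly one $s_i$, so there $e_p=i$ and $s_i\sim\prod_{e_p=i}p$ is again determined by $a$, with the same multiplicity bound killing the excess factors. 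Finally I would reduce (v) and (vii) to (iv) and (vi) respectively: a factorization of type (v) expands to one of type (iv) by repeating each $s_i$ exactly $k_i$ times (a legitimate divisibility chain), and type (vii) expands to type (vi) by inserting units at the missing positions; applying the already-proved uniqueness and then regrouping maximal blocks of associates --- which are well defined because of the strictness conditions $s_i\not\sim s_{i+1}$ and the coprimality $s_i\:\rpr\,s_j$ --- recovers $n=m$, $s_i\sim t_i$ and $k_i=l_i$.

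The main obstacle I anticipate is (i): everything downstream is either an induction built on top of it or a translation into prime multiplicities, so the real content is the passage from $b_1^2c=d_1^2e$ with $b_1\:\rpr\,d_1$ to $b_1,d_1\in R^{\ast}$, i.e.\ correctly chaining the coprimality statements of Lemma~\ref{l2} in the GCD setting. A secondary point of care is the unit bookkeeping in the induction for (ii) and in the expand-and-regroup arguments for (v) and (vii), where one must check that associateness is preserved when the leading unit is absorbed into a square-free factor.
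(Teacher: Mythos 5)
Your proposal is correct and follows essentially the same route as the paper: part (i) by extracting $\gcd(b,d)$ and reducing to the coprime case via Lemma~\ref{l2} (you use parts a) and d) where the paper invokes part b), a cosmetic difference), then (ii)--(iii) by bootstrapping from (i), and (iv)--(vii) by comparing prime multiplicities in the UFD. In fact you supply the inductive and bookkeeping details that the paper compresses into ``follow from (i)'' and ``straightforward using an irreducible decomposition.''
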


\begin{proof}
{\rm (i)} \
Assume that $b^2c=d^2e$.
Put $f=\gcd(b,d)$, $g=\gcd(c,e)$, $b=fb_0$, $d=fd_0$,
$c=gc_0$, and $e=ge_0$, where $b_0,c_0,d_0,e_0\in R$.
We obtain $b_0^2c_0=d_0^2e_0$, $\gcd(c_0,e_0)=1$
and $\gcd(b_0,d_0)=1$, so also $\gcd(b_0^2,d_0^2)=1$.
By Lemma~\ref{l2} b), we infer $b_0^2\sim e_0$
and $c_0\sim d_0^2$, but $c_0,e_0\in\Sqf R$ by Lemma~\ref{l1},
so $b_0,d_0\in R^{\ast}$, and then $c_0,e_0\in R^{\ast}$.

\medskip

\noindent
Statements {\rm (ii)}, {\rm (iii)} follow from {\rm (i)}.

\medskip

\noindent
Statements {\rm (iv)} -- {\rm (vii)} are straightforward using an
irreducible decomposition.
\end{proof}

\end{document}